
\documentclass{amsart}
\usepackage{fullpage}
\usepackage{amsfonts,amsmath,amscd,amssymb}
\usepackage{dsfont,mathtools}
\usepackage{pgf,tikz,tikz-cd}
\usetikzlibrary{arrows,positioning}
\usepackage{xspace}
\usepackage{stmaryrd}
\usepackage{latexsym}
\usepackage[all]{xy}



\newtheorem{theorem}{Theorem}[section]

\newtheorem{prop}[theorem]{Proposition}

\newtheorem{cor}[theorem]{Corollary}

\newcommand{\Aut}{{\mathrm A}{\mathrm u}{\mathrm t}}

\newcommand{\Opext}{{\mathrm O}{\mathrm p}{\mathrm e}{\mathrm x}{\mathrm t}}

\newcommand{\Ass}{{\mathrm A}{\mathrm s}{\mathrm s}}

\newcommand{\sS}{\mathcal{S}}

\newcommand{\sG}{\mathcal{G}}
\newcommand{\sI}{\mathcal{I}}
\newcommand{\sH}{\mathcal{H}}

\newcommand{\vi}{\ensuremath{\mathbf{i}}\xspace}

\newcommand{\vs}{\ensuremath{\mathbf{s}}\xspace}
\newcommand{\vt}{\ensuremath{\mathbf{t}}\xspace}

\newcommand{\fourpartdef}[8]{\left\{
	\begin{array}{ll}
		#1 & \mbox{if } #2 \\
		#3 & \mbox{if } #4 \\
		#5 & \mbox{if } #6 \\
		#7 & \mbox{if } #8
	\end{array}
	\right.}

\begin{document}



\title[Covering Groups]{Covering Groups of Nonconnected Topological Groups and 2-Groups}
\author{Dmitriy Rumynin}
\email{D.Rumynin@warwick.ac.uk}
\address{Department of Mathematics, University of Warwick, Coventry, CV4 7AL, UK\newline
\hspace*{0.31cm}  Associated member of Laboratory of Algebraic Geometry, National
Research University Higher School of Economics, Russia}
\thanks{The research was partially supported by the Russian Academic Excellence Project `5--100' and by the Max Planck Society.
\newline \hspace*{0.31cm} Word count: 5129 words (with numbers), 4581 words (without numbers)}
\author{Demyan Vakhrameev}
\email{dem.vakh@hotmail.co.uk}
\address{Department of Mathematics, University of Warwick, Coventry, CV4 7AL, UK}
\author{Matthew Westaway} 
\email{M.P.Westaway@warwick.ac.uk}
\address{Department of Mathematics, University of Warwick, Coventry, CV4 7AL, UK}

\date{March 26, 2019}
\subjclass[2010]{Primary  22E20; Secondary 18D05, 20J05}
\keywords{2-group, universal cover, group extension, cohomology}

\begin{abstract}
  We investigate the universal cover of a topological group that is not
  necessarily connected.
  Its existence as a topological group is governed by a Taylor cocycle,
  an obstruction in 3-cohomology.
  Alternatively, it always exists as a topological 2-group.
The splitness of this 2-group is also governed by an obstruction in 3-cohomology, a Sinh cocycle.
  We give explicit formulas for both obstructions and show that they
  are equal.
\end{abstract}

\maketitle

Let $G$ be a locally arcwise connected,
semilocally simply-connected topological group 
(e.g., a Lie group), $\pi:\widetilde{G}\rightarrow G$
a universal cover of the underlying space of $G$.
If $G$ is connected, a choice of a point in $\pi^{-1} (1_G)$
supplies $\widetilde{G}$ with a topological group structure
so that $\pi$ is a homomorphism of topological groups. 
If $G$ is not connected, we must specify multiplication of paths
(or loops)
living on different connected components.

This problem was investigated by Taylor over 60 years ago \cite{Ta2}.
The conditions on the topological group $G$ ensure
existence of the identity component $G_0$, the component group
$\pi_0 (G) = G/G_0$ and the universal cover of the identity
component $\widetilde{G}_0$ with the abelian fundamental
group $\pi_1 (G)$.
Existence of a group structure on $\widetilde{G}$ is controlled by the Taylor cocycle
$\dot\eta^\sharp\in Z^3(\pi_0(G),\pi_1(G))$.
Its cohomology class 
$[\dot\eta^\sharp]\in H^3(\pi_0(G),\pi_1(G))$ 
is an obstruction for existence of the universal cover group 
\cite[Theorem (6.5)]{Ta2} (cf. Theorem~\ref{main_th}):
\begin{center}
{\em The topological group $\widetilde{G}$ exists if and only if $[\dot\eta^\sharp]=0$.}
\end{center}




The question of uniqueness of $\widetilde{G}$ 
is subtle because 
we have two notions of uniqueness to consider.
Two group structures on the topological space $\widetilde{G}$
are equivalent if they define congruent \cite[p. 64]{ML}
extensions, but there are two extensions in the frame:
$$
1 \to \pi_1(G) \to \widetilde{G} \to G \to 1
\ \ \ \mbox{ and } \ \ \ 
1 \rightarrow \widetilde{G}_{0} \rightarrow
\widetilde{G}\rightarrow \pi_0(G)
\rightarrow 1.
$$
We consider uniqueness of the first, but only examine uniqueness of the second as it fits into a larger diagram.
This second type of uniqueness is already settled by Taylor:
the congruence classes of these diagrams
is a torsor over the cohomology group $H^2(\pi_0(G),\pi_1(G))$.
Similarly, the congruence classes of the second extensions
is a torsor over a quotient group 
$H^2(\pi_0(G),\pi_1(G))/\Delta(H^1(G_0,\pi_1(G))^{\pi_0(G)})$. 
That existence of a universal covering group is determined by a cohomology group one degree 
higher than the cohomology group determining uniqueness has parallels with previous work by two 
of the authors in extending representations from subgroups \cite{RWes}.

Our main accomplishment in the present paper is that we
relate the existence of the universal cover group $\widetilde{G}$ 
to the splitness 
of the topological 2-group $\widetilde{\sG}$
associated to the topological group $G$.
The 2-group $\widetilde{\sG}$
is the 2-group of the crossed module
$\widetilde{G_0}\rightarrow G$. 
It admits
a Sinh cocycle $\theta \in Z^3(\pi_0(G),\pi_1(G))$
that controls whether $\widetilde{\sG}$ is split, i.e., 
2-equivalent to a skeletal and strict 2-group.
Now we can state the main result of this paper
(see Theorem~\ref{main_th} for the full statement):
\begin{center}
{\em   The equality }  
  $[\theta]=[\dot\eta^\sharp]$
{\em   holds. Hence, the group $\widetilde{G}$ exists if and only if
  the 2-group $\widetilde{\sG}$ is split.}
\end{center}
It would be interesting to have a conceptual, non-computational proof
of this result. 

We start with a brief historic review in
Section~\ref{s0}, where we also contrast our approach with other
developments since Taylor's work. 



In Section~\ref{s1} we commence our study
by defining the topological 2-group $\widetilde{\sG}$.
The 3-cocycle $\theta\in Z^3(\pi_0(\widetilde{\sG}),\pi_1(\widetilde{\sG}))$
associated to a 2-group is a well-known construction \cite{BL04, Elg}
that is attributed to Sinh's Thesis \cite{Si}.
We give an explicit formula (Equation~(\ref{cocycle3})),
adapted to $\widetilde{\sG}$,
for $\theta\in Z^3(\pi_0(G),\pi_1(G))$
(note that $\pi_0(G)=\pi_0(\widetilde{\sG})$
and
$\pi_1(G)=\pi_1(\widetilde{\sG})$).

We develop essential algebraic tools in Section~\ref{s3}.
We require an obstruction for lifting a central extension to
an abelian extension along another extension.
This obstruction is known in the language
of abstract kernels and crossed resolutions.
In particular, we develop these tools in the context of
extensions of topological groups
and prove Theorem~\ref{main_th},
the main theorem of the present paper. We also derive an explicit formula for the
Taylor cocycle (Equation~(\ref{cocycle3}))
required for further use. 
The cocycle
$[\dot{\eta}^\sharp]\in H^3(\pi_0(G),\pi_1(G))$
controls existence of $\widetilde{G}$ as an abstract group.
We conclude in Theorem~\ref{main_th}
that such $\widetilde{G}$ also has the structure of a topological group and that this cocycle is an object we already know: it is precisely the obstruction $[\theta]$ from Section~\ref{s1}. 


Finally, in Section~\ref{s4} we generalize our results to the case
of a more general extension.




\section{Historical review}
\label{s0}

With a rich history of results and significant differences in terminology
we think that a historic review could benefit the reader. Perhaps the earliest results relevant to our interests are 
those of Mac Lane and Whitehead in the 1940s \cite{ML49,MW,Wh}. Using their idea of a crossed module, Taylor answers one of the questions considered in the present paper in 1954 \cite{Ta2}. He obtains his Theorem 6.5 which says that a topological group (with some topological conditions) has a universal covering group if and only if the obstruction of the crossed module $\widetilde{G_0}\to G$ vanishes; he also shows that the second cohomology group controls uniqueness \cite{Ta1}. His procedure makes ample use of the notion of extensions of crossed modules.

The next development worth mentioning is the 1976 paper by Brown and Spencer \cite{BS}, where they prove that the category of crossed modules is equivalent to the category of ``group-groupoids'' (better known today as ``strict 2-groups''). This result, they claim, was known to Verdier and Duskin a decade prior, but they are the first authors to publish a proof of it. They also extend this result to an equivalence of 2-categories. Furthermore, given a topological group $G$ one can obtain the strict 2-group $\sG$ -- see Section~\ref{s1} for more details. By Brown-Spencer's results, this strict 2-group corresponds to a crossed module, which hence has an obstruction in $H^3(\pi_0(G),\pi_1(G))$. 


Around the same time, Sinh 
shows in her thesis \cite{Si} 
that a coherent 2-group (called a \emph{gr-category} by Sinh) is determined up to equivalence by a group $H$, an abelian group $A$, an action of $H$ on $A$ by automorphisms, and the cohomology class of a 3-cocycle $[\theta]$ in $H^3(H,A)$. This correspondence was also explained by Joyal and Street in Sections 2 and 6 of \cite{JS}, a 1986 draft of the paper \cite{JS2} which would be published in 1993 without this explanation. A modern treatment of Sinh's results can be found in papers by Baez and Lauda \cite{BL04} and Elgueta \cite{Elg}, where the 3-cocycle is found using associators in the 2-group. This 3-cocycle vanishes precisely when the 2-group is strict and skeletal \cite{Elg}. The reader may consult Section~\ref{s1} of this paper for a self-contained explanation of this construction in the context of universal covers of topological groups.


The final development in our review is the 1994 paper by Brown and Mucuk \cite{BM}. They use the developments of the preceding 40 years in order to reinterpret and generalize Taylor's results through the lens of strict 2-groups and (using Brown and Spencer's results from \cite{BS}) crossed modules. The reader should note that while both Taylor and Brown-Mucuk use the notion of crossed modules in their approaches, they are nonetheless quite different -- in particular, Brown and Mucuk's approach avoids the lengthy algebraic exposition of Taylor's series of papers.

The reader will note that the role of strict 2-groups in this topic, while important, is a vehicle to turn the question into one about crossed modules. There are many good reasons for this, however we feel that this approach can miss the significance of topological 2-groups in answering the main question. The fundamental benefit of looking at topological 2-groups is that the universal covering group of a disconnected topological group (with a suitably nice topology) \emph{always exists} as a topological 2-group. Hence, the question at the heart of this study is: when does the universal covering topological \emph{2-group} give rise to a universal covering topological \emph{group}? In particular, what structure of the topological 2-group is required to get the appropriate group structure? We answer these questions in the following sections.

\section{2-Groups related to a topological group}
\label{s1}
Let $G$ be a locally arcwise connected,
semilocally simply-connected topological group
with the identity component $G_0$ and its universal cover
group $\widetilde{G_0}$. 
We follow the standard 2-group theoretic terminology \cite{BL04}, while
using the notation in Rumynin, Wendland \cite{RW}.
Let $\sG$ and $\widetilde{\sG}$
be the 2-groups associated to the crossed modules
$G_0\rightarrow G$ and $\widetilde{G_0}\rightarrow G$
correspondingly.
The natural map of crossed modules
$[\widetilde{G_0}\rightarrow G] \longrightarrow [G_0\rightarrow G]$
gives a homomorphism of topological  2-groups
$\widetilde{\sG}\rightarrow \sG$.

Let us examine $\widetilde{\sG}$,  the 2-group theoretic counterpart of the universal cover of $G$,
in greater detail. Recall that a 2-group is a 2-category with one 0-object
where all 1-morphisms are 1-isomorphisms and
all 2-morphisms are 2-isomorphisms.
Thus, the 0-objects are the 1-element set:
$\widetilde{\sG}_0=\{\star\}$.
Now $\widetilde{\sG}_1 (\star,\star)$ needs to be a (monoidal) category, while
$\widetilde{\sG}_2 (x,y)
= \widetilde{\sG}_1 (\star,\star) (x,y)$ 
is the set of morphisms between objects
$x,y\in \widetilde{\sG}_1 (\star,\star)$. For this 2-group we have 
$$
\widetilde{\sG}_1 (\star,\star) = G, \ \ 
\widetilde{\sG}_2 (x,y) = \{ \llbracket\gamma\rrbracket \,\mid \mbox{ homotopy class of continuous }
\gamma\colon [0,1]\rightarrow G, \, \gamma(0)=x, \, \gamma(1) = y \}.
$$
The horizontal composition and inverse
form a monoidal structure on the category
$\widetilde{\sG}_1 (\star,\star)$.
They come from the group operations on $G$:
$$
x\diamond y = xy, \ \llbracket \beta\rrbracket \diamond\llbracket \gamma\rrbracket =\llbracket \beta(\vt)\gamma(\vt)\rrbracket .
$$
In particular the 2-group $\widetilde{\sG}_1$
is {\em strict}: the associativity and the inverse property
hold on the nose.
The vertical composition and inverse are concatenation and reversal of paths:
$$
\beta\bullet \gamma (\vt)=
\xymatrix{
\star
   \ar@/^2pc/[rr]_{\quad}^{z}="3"
    \ar[rr]|{y}="2"
   \ar@/_2pc/[rr]_{x}="1"
&& \star
\ar@{}"1";"2"|(.2){\,}="7"
\ar@{}"1";"2"|(.8){\,}="8"
\ar@{=>}"7" ;"8"^{\beta}
\ar@{}"2";"3"|(.2){\,}="9"
\ar@{}"2";"3"|(.8){\,}="10"
\ar@{=>}"9" ;"10"^{\gamma}
}
=
\begin{cases} 
  \beta (2\vt) \  & \ \mbox{ if }  \ \vt\leq 1/2,  \\
  \gamma (2\vt -1) \  & \ \mbox{ if }  \ \vt\geq 1/2 , 
\end{cases}
\ \ \ \ \ \ \ \ 
\gamma^{-1\bullet} (\vt) = \gamma (1-\vt).
$$
The 2-group $\sG$ admits a similar, somewhat easier description
with the same horizontal operations and the trivial vertical operations:
$$
{\sG}_0 = \{\star\}, \  \ 
{\sG}_1 (\star,\star) = G, \ \ 
{\sG}_2 (x,y) =
\begin{cases} 
  \{\sI_{x,y} \} \  & \ \mbox{ if }  \ y^{-1}x\in G_0,  \\
  \  \ \emptyset \  & \ \mbox{ if }  \ y^{-1}x\not\in G_0 . 
\end{cases}
$$
Both $\widetilde{\sG}$ and $\sG$
are topological 2-groups because
$\widetilde{\sG}_1=\sG_1=G$,  $\widetilde{\sG}_2$
and $\sG_2$
inherit topologies from $G$
under which all the operations
are continuous. 
The 2-group homotopic properties of $\widetilde{\sG}$
resemble those of $G$.
Recall that $\pi_0(\widetilde{\sG})$
is the group of isomorphism classes in $\widetilde{\sG}_{1}(\star,\star)$ \cite{Elg}. 
Clearly, both $\pi_0(\widetilde{\sG})$ and $\pi_0(\sG)$
are naturally isomorphic to $\pi_0(G)$.
Also recall that
$\pi_1(\widetilde{\sG}) =  \widetilde{\sG}_{2}(\vi_\star,\vi_\star)$ \cite{Elg}. 
Again, it is clear that
$\pi_1(\widetilde{\sG})=\widetilde{\sG}_{2}(1,1)$ is naturally isomorphic to $\pi_1(G)$,
while
$\pi_1(\sG)= {\sG}_{2}(\vi_\star,\vi_\star) ={\sG}_{2}(1,1) =\{\sI_{1,1}\}$
is the trivial group.

This gives a 2-group-theoretic action of $\pi_0(\widetilde{\sG})=\pi_0(G)$
on $\pi_1(\widetilde{\sG})=\pi_1 (G)$ \cite{Elg} (cf. \cite{BL04}):
$$
\llbracket g\rrbracket \cdot x \coloneqq g \diamond x \diamond g^{-1}.
$$

Let us now recall the standard action.
Think of the universal cover as the end-preserving
homotopy classes
of continuous paths:
$$
\widetilde{G_0}
= \{ \llbracket \gamma\rrbracket  \,\mid \mbox{ homotopy class of continuous }
\gamma\colon [0,1]\rightarrow G, \, \gamma(0)=1_G \}
$$
with the pointwise multiplication and inverses
$$
\llbracket \beta\rrbracket \llbracket \gamma\rrbracket  = \llbracket \beta(\vt)\gamma(\vt)\rrbracket , \
\llbracket \beta\rrbracket ^{-1} = \llbracket \beta(\vt)^{-1}\rrbracket .
$$
The map $\widetilde{G_0} \rightarrow G_0$ is given by
$\llbracket \gamma\rrbracket \mapsto \gamma (1)$
so that $\pi_1(G)= \{ \llbracket \gamma\rrbracket  \,\mid \gamma:\, \gamma(0)=\gamma(1)= 1_G \}$.

A set theoretic splitting $\alpha\colon\pi_0 (G)\rightarrow G$
is tantamount to the choice of an element on each connected component: 
$\alpha (gG_0) = \overline{g}$.
Since $\pi_1(G)$ is a central subgroup of $\widetilde{G_0}$,
it becomes a $G$-module, trivial on $G_0$:
$$
\,^g \llbracket \gamma\rrbracket  = \llbracket g\gamma(\vt)g^{-1}\rrbracket .
$$
In particular, it is a $\pi_0(G)$-module.
This is the standard action. It is the same as the 2-group-theoretic action.

We have already discussed what it means for a 2-group $\sH$ (with
$\sH_0=\{\star\}$) to be {\em strict}. All the algebraic properties
of the monoidal category $\sH_1 (\star,\star)$ must hold on the nose:
$$
(x\diamond y) \diamond z = x\diamond (y \diamond z), \ \ \ 
x\diamond \vi_\star = \vi_\star \diamond x = x, \ \ \ 
x\diamond x^{-1}=  \vi_\star = x^{-1} \diamond x
$$
for all objects $x,y,z\in \sH_1 (\star,\star)$.
A 2-group $\sH$ is called {\em skeletal}, if all 2-morphisms
are automorphisms, i.e.,
$$
\sH_2 (x,y) \neq \emptyset
\ \ \ \mbox{ if and only if } \  \ 
x = y.
$$
A 2-group is called {\em split}, if it is 2-equivalent to a strict skeletal 2-group. 
The theory of 2-groups gives us a Sinh cocycle
$\theta_\sH \in Z^3(\pi_0(\sH),\pi_1(\sH))$
associated to the 2-group ${\sH}$ \cite{BL04,Elg}.
This is the obstruction to splitness: $[\theta_\sH]=0$ if and only if
$\sH$ is split. 

Instead of the general construction
we recall briefly how to build a Sinh cocycle
$\theta = \theta_{\widetilde{\sG}} \in Z^3(\pi_0(G),\pi_1(G))$
for the 2-group $\widetilde{\sG}$ of our primary interest.
Let us try to build a skeletal 2-group $\sS$, 2-equivalent to
$\widetilde{\sG}$. Since $\sS$ and $\widetilde{\sG}$
have the same homotopic properties, inevitably we must have
$$
\sS_0=\{\star\}, \  
\sS_1 (\star,\star) = \pi_0(G), \ 
\sS_2 (g,h) =
\begin{cases} 
  \pi_1 (G, \overline{g}) \  & \ \mbox{ if }  \ g=h,  \\
        \emptyset, \  & \ \mbox{ if } \ g\neq h, 
\end{cases}
$$
where $\overline{g}\in G$ is some point lifting $g\in\pi_0 (G)$,
i.e., $\overline{g}G_0 = g$. 
Now the vertical composition $\bullet$ is the concatenation of paths
(or just the multiplication in  $\pi_1 (G, \overline{g})$).
To assemble the standard 2-equivalences
$$
\widetilde{\sG}\Rightarrow\sS, \ x \mapsto xG_0; \ \ 
\sS\Rightarrow\widetilde{\sG}, \ g \mapsto \overline{g}, 
$$
the horizontal composition $\diamond$ in $\sS$
must come from the pointwise multiplication in $G$, but 
there is an issue:
$\sS_2 (g,g) \diamond \sS_2 (h,h) = \pi_1 ( G, \overline{g}\,\overline{h})$,
not 
$\sS_2 (gh,gh) = \pi_1 ( G, \overline{gh})$ as we wish for.
We can identify paths using right multiplications:
$$
R_x \colon \pi_1(G) \xrightarrow{\cong} \pi_1 (G, x) , \ \  \big(R_x (\gamma)\big) (\vt) = \gamma(\vt)x
$$
but it is not functorial. 
To make things work
we need to choose a path $\beta_x$
from $1$ to each $x\in G_0$.
This gives a path 
$\beta_{x,y}\coloneqq R_x(\beta_{yx^{-1}})$ from $x$ to $y$
for every pair of elements from the same component, i.e.,
for all $x\in G_0 y$.
We define the horizontal composition on morphisms
using $\phi_{f,g}\coloneqq \beta_{\overline{fg},\overline{f}\,\overline{g}}\ $ : 
$$
\sS_2 (g,g) \times \sS_2 (h,h) \xrightarrow{\diamond}
\sS_2 (gh,gh), \ \
   \llbracket \gamma\rrbracket  \diamond \llbracket \delta\rrbracket  =
\phi_{g,h}
   \bullet
       \llbracket \gamma \delta\rrbracket 
       \bullet
\phi_{g,h}^{\; -1 \bullet}
       \ .$$
       Being skeletal comes at a cost: the new 2-group $\sS$
       is no longer strict, in general (one may similarly observe this imbalance in the functor $\Sigma$ appearing in \cite[Proposition 9 et al.]{JS}). 
       The associativity
       constraint in $\sS$ is a natural isomorphism of trifunctors
       $
       \Ass \colon  
       ( \; \underline{\ \ } \diamond  \underline{\ \ } \; )\; \diamond
       \underline{\ \ }
       \; \rightarrow \; 
       \underline{\ \ } \; \diamond (\; \underline{\ \ } \diamond  \underline{\ \ } \; )
       \; 
       $, given by
\begin{equation}\label{ass}
       \pi_1(G, \overline{fgh})
\ni 
\Ass_{f,g,h} =
\llbracket
\phi_{f,gh}\bullet
              (\vi_f \diamond \phi_{g,h})
       \bullet
              (\phi_{f,g}\diamond \vi_h)^{-1 \bullet}
\bullet
       \phi_{fg,h}^{\;-1 \bullet} \rrbracket \; .
\end{equation}
An interested reader can verify the pentagon condition.
The cocycle $\theta$ is obtained from the associativity
constraint by moving the base point to the identity element:
       $$
       \pi_1(G,1) \ni 
       \theta (f,g,h) 
       = R_{\overline{fgh}^{\; -1}} (\Ass_{f,g,h})
       = R_{\overline{fgh}}^{\; -1} \, (\Ass_{f,g,h}).  
       $$
       The cocycle property for $\theta$ follows from the pentagon condition for
       the associator $\Ass$.
       Let us define
       $\eta(f,g)\coloneqq \overline{f}\,\overline{g}\,(\overline{fg})^{-1}\in G_0$ for all $f,g\in\pi_0(G)$.
       Observe that
       $$
       R_{\overline{fg}}^{\, -1} \, (\phi_{f,g})
       =
       R_{\overline{fg}}^{\, -1} \, (\beta_{\overline{fg},\overline{f}\,\overline{g}})
       =
       R_{\overline{fg}}^{\, -1} \, (  R_{\overline{fg}} \,
       (\beta_{\eta (f,g)}))
       = \beta_{\eta (f,g)}\, ,
       $$
      \begin{multline*}
       R_{\overline{fgh}}^{-1} \, (\vi_f\diamond \phi_{g,h})
       =R_{\overline{fgh}}^{-1} \, (\llbracket \vi_f(\vt)\phi_{g,h}(\vt)\rrbracket )
       = R_{\overline{fgh}}^{-1} \, (L_{\overline{f}}R_{\overline{gh}}(\beta_{\eta(g,h)}))=\\=
       R_{\overline{fgh}}^{-1} \, (R_{\overline{f}\,\overline{gh}}(\,^{\overline{f}}\beta_{\eta(g,h)})) 
       =R_{\overline{fgh}}^{-1} \, (R_{\overline{fgh}}R_{\eta(f,gh)}(\,^{\overline{f}}\beta_{\eta(g,h)}))
       = R_{\eta(f,gh)}\,^{\overline{f}}\beta_{\eta(g,h)} \, , 
       \end{multline*}
       \begin{multline*}
       R_{\overline{fgh}}^{-1} \, (\phi_{f,g} \diamond \vi_h )
       = R_{\overline{fgh}}^{-1}\, (\llbracket \phi_{f,g}(\vt)\vi_h(\vt)\rrbracket )
       = R_{\overline{fgh}}^{-1}\, (R_{\overline{h}}R_{\overline{fg}}(\beta_{\eta(f,g)}))
       	\\ = R_{\overline{fgh}}^{-1}\, (R_{\overline{fg}\,\overline{h}}(\beta_{\eta(f,g)}))
       = R_{\overline{fgh}}^{-1}\, (R_{\overline{fgh}}R_{\eta(fg,h)}(\beta_{\eta(f,g)}))		
       = R_{\eta(fg,h)}(\beta_{\eta(f,g)}) \, .
       \end{multline*}
       so that Equation~(\ref{ass}) gets translated into
       an explicit formula for $\theta$: 
       \begin{equation}\label{cocycle}
         \theta(f,g,h) =
         \llbracket \beta_{\eta(f,gh)}\bullet R_{\eta(f,gh)}\,^{\overline{f}}\beta_{\eta(g,h)}\bullet R_{\eta(fg,h)}(\beta_{\eta(f,g)})^{-1{\bullet}}\bullet\beta_{\eta(fg,h)}^{-1{\bullet}} \rrbracket \, .  
         \end{equation}

%

\section{Obstruction for existence of the universal cover}
\label{s3}
We continue with a locally arcwise connected,
semilocally simply-connected topological group $G$
and its identity component $G_0$.
A universal covering group for $G$ should fit into the following diagram:

\begin{equation}
\label{diag1}
\begin{CD}
@. 1 @. 1@.  @.  \\  
@. @VVV @VVV @. @. \\  
@. \pi_1(G) @. \pi_1(G) @. @. \\
@. @VVV @VV{i}V @. @. \\
@. \widetilde{G_0} @. \widetilde{G}@. @.\\
@. @VVV @VV{j}V @. @. \\
1 @>>> G_0 @>>> G@>>> \pi_0(G) @>>> 1 \\
@. @VVV @VVV @. @. \\  
@. 1 @. 1@.  @.  \\  
\end{CD}
\end{equation}

Observe that the group $G$ acts on $\widetilde{G_0}$ by the pointwise conjugation of paths:
$
g\cdot [\gamma (\vt) ] = [g\gamma(\vt)g^{-1}].
$
In particular, this induces a $G$-module structure on $\pi_1(G)$ which extends the trivial $G_0$-module structure.
The reader should also note that the bottom row is an exact sequence and the left column is a central extension. In general, 
one should expect the middle column to be an abelian extension, but not necessarily a central one.

The algebraic question of when there exist an abstract group $\widetilde{G}$ and 
group homomorphisms $i$ and $j$ that fit in Diagram~(\ref{diag1}) has been tackled by various authors.
See, for example, the work of Ratcliffe \cite{Rat}, Huebschmann \cite{Hue4} and Wu \cite{Wu}, or Huebschmann \cite{Hue3} for an approach in a more generalized setting. Their main procedure is as follows: let 
$$1\to N\to H\to P\to 1$$ be an exact sequence of groups, and let $A$ be an $H$-module on which $N$ acts trivially. Denote by $\Opext_H(N,A)$ the abelian group consisting of equivalence classes of extensions of $N$ by $A$ which are equipped with an $H$-action respecting the $H$-actions on $A$ and $N$ ($H$ acts on $N$ by conjugation). For the precise definition of the equivalence relation on extensions and the group structure of $\Opext_H(N,A)$, the reader should consult the above sources. This group fits into the exact sequence
\begin{equation}\label{Opext}
H^2(P,A)\to H^2(H,A)\xrightarrow{\gamma} \Opext_H(N,A)\xrightarrow{\epsilon} H^3(P,A)\to H^3(H,A).
\end{equation}

In the situation at hand, we start with an element $\Psi\in \Opext_G(G_0,\pi_1(G))$ corresponding to the left column of Diagram~(\ref{diag1}), and we 
wish to determine the existence of an abelian extension $$1\to \pi_1(G) \to \widetilde{G}\to G\to 1$$ which completes the diagram. In other words, we require $\Psi$ to lie in the image of the homomorphism $\gamma:H^2(G,\pi_1(G))\to \Opext_G(G_0,\pi_1(G))$, or, equivalently, that $\Psi$ lies in the kernel of the homomorphism $\epsilon:\Opext_G(G_0,\pi_1(G))\to H^3(\pi_0(G),\pi_1(G))$. We will abuse notation to use $\Psi$ both for the specific central extension in the above diagram, and its equivalence class.

Ratcliffe's work in \cite{Rat} gives the image of $\Psi$ under this map explicitly. Specifically, Diagram~\ref{diag1} gives a crossed 2-fold extension $$1\to \pi_1(G)\to \widetilde{G_0}\xrightarrow{\rho} G\to \pi_0(G),$$ and the map $\epsilon$ sends $\Psi$ to the Eilenberg-Mac Lane obstruction of this extension. This can be constructed as follows (see \cite{MW}). Recall that we let $\overline{f}\in G$ be a fixed lifting of $f\in\pi_0(G)$. Define $$\eta:\pi_0(G)\times \pi_0(G)\to G, \qquad\, \eta(f,g)=\overline{f} \, \overline{g} \, (\overline{fg})^{-1}.$$ Since $\pi_0(G)$ is abelian, $\eta(f,g)$ is in the kernel of the projection $G\to \pi_0(G)$. Hence, by exactness there exists $\dot{\eta}:\pi_0(G)\times\pi_0(G)\to \widetilde{G}$ such that $\rho(\dot{\eta}(f,g))=\eta(f,g)$ for all $f,g\in\pi_0(G)$. We may assume $\dot{\eta}(f,1)=\dot{\eta}(1,g)=1$ for all $f,g\in\pi_0(G)$.

Now, $d\dot{\eta}(f,g,h)=\,^{\overline{f}}\dot{\eta}(g,h)\dot{\eta}(f,gh)(\dot{\eta}(f,g)\dot{\eta}(fg,h))^{-1}$ maps to the identity under $\rho$, and hence lies inside $\pi_1(G)$. Letting $\dot{\eta}^{\sharp}\coloneqq d\dot{\eta}:\pi_0(G)\times \pi_0(G)\times \pi_0(G)\to \pi_1(G)$, we get that $[\dot{\eta}^{\sharp}]\in H^3(\pi_0(G),\pi_1(G))$ is the Eilenberg-Mac Lane obstruction of the above extension. It can be shown that this procedure is independent of the lifting of $\pi_0(G)$ to $G$. As a result, the homomorphism $\epsilon:\Opext_G(G_0,\pi_1(G))\to H^3(\pi_0(G),\pi_1(G))$ sends $\Psi$ to $[\dot{\eta}^{\sharp}]\in H^3(\pi_0(G),\pi_1(G))$. We shall call $\dot{\eta}^{\sharp}$ the \emph{Taylor cocycle} for historic reasons.

The reader should note that the crossed module $\widetilde{G_0}\to G$ we consider here corresponds to the strict 2-group $\widetilde{\sG}$ examined in Section \ref{s1} under Brown-Spencer's correspondence \cite{BS}. In particular, the following theorem shows that the Sinh cocycle of the 2-group $\widetilde{\sG}$ is cohomologous to the Eilenberg-Mac Lane obstruction of its corresponding crossed module $\widetilde{G_0}\to G$.

\begin{theorem}
	\label{main_th}
	Let $G$ be a
	locally arcwise connected,
	semilocally simply-connected topological group.
	The following statements related to objects $G_0$, $\widetilde{G_0}$, $\pi_1(G)$, $\pi_0(G)$, $\dot{\eta}^\sharp$ and $\theta$, unveiled
	in the preceding passage, hold:
	\begin{enumerate}
		\item The universal cover group $\widetilde{G}$ exists if and only if
		$[\dot{\eta}^\sharp]=0 \in H^3(\pi_0(G),\pi_1(G))$.
		\item The classes are equal: 
		$[\dot{\eta}^\sharp]= [\theta] \in H^3(\pi_0(G),\pi_1(G))$.
		\item The universal cover topological group $\widetilde{G}$ exists if and only if
		the 2-group $\widetilde{\sG}$ is 2-equivalent to a skeletal strict 2-group.
	\end{enumerate}
\end{theorem}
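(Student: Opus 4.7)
The plan is to prove (1), (2), and (3) in that order, with (2) carrying the main content and (3) following formally from the first two together with Sinh's theorem.

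For (1), I would invoke the Opext exact sequence (\ref{Opext}) together with the identification $\epsilon(\Psi)=[\dot\eta^\sharp]$ recorded above. By exactness, $\Psi$ lies in the image of $\gamma\colon H^2(G,\pi_1(G))\to\Opext_G(G_0,\pi_1(G))$ --- equivalently, Diagram~(\ref{diag1}) can be completed by an abelian extension $1\to\pi_1(G)\to\widetilde{G}\to G\to 1$ --- if and only if $[\dot\eta^\sharp]=0$. That the resulting abstract $\widetilde{G}$ is automatically a topological group follows by endowing its underlying set with the universal-cover topology (which exists by the hypothesis on $G$) and checking, as in Taylor's original treatment, that the abelian extension multiplication is continuous: on each component it is built from the continuous multiplication on $\widetilde{G_0}$ together with a continuous choice of local sections indexed by $\pi_0(G)$.

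For (2), I would exploit the freedom in the lift $\dot\eta$ and set $\dot\eta(f,g):=\llbracket\beta_{\eta(f,g)}\rrbracket\in\widetilde{G_0}$ using the very paths $\beta_x$ that appear in the construction of $\theta$ in (\ref{cocycle}). Since multiplication in $\widetilde{G_0}$ is pointwise and the $G$-action is by pointwise conjugation, the formula for $\dot\eta^\sharp$ unfolds into
\begin{equation*}
\dot\eta^\sharp(f,g,h)=\llbracket\overline{f}\,\beta_{\eta(g,h)}(\vt)\,\overline{f}^{-1}\,\beta_{\eta(f,gh)}(\vt)\,\beta_{\eta(fg,h)}(\vt)^{-1}\,\beta_{\eta(f,g)}(\vt)^{-1}\rrbracket,
\end{equation*}
a loop at $1\in G$, while $\theta(f,g,h)$ from (\ref{cocycle}) is a concatenation of the same four ingredients, after right-translations by elements of $G_0$. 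To identify these two classes in $\pi_1(G)$, I would apply the standard square-homotopy identity for topological groups: for paths $\alpha,\beta$ in $G$, the pointwise product $\alpha(\vt)\beta(\vt)$ is homotopic rel endpoints to $R_{\beta(0)}\alpha\bullet L_{\alpha(1)}\beta$ and its symmetric variants. Applied three times and combined with the endpoint-matching identity $\overline{f}\,\eta(g,h)\,\overline{f}^{-1}\,\eta(f,gh)=\eta(f,g)\,\eta(fg,h)$ forced by the definition of $\eta$, together with commutativity of $\pi_1(G)$, this reduces the pointwise expression for $\dot\eta^\sharp$ to the concatenation expression for $\theta$ modulo a 2-coboundary of the bar complex.

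Part (3) is then immediate: $\widetilde{G}$ exists as a topological group iff $[\dot\eta^\sharp]=0$ by (1), iff $[\theta]=0$ by (2), iff $\widetilde{\sG}$ is 2-equivalent to a strict skeletal 2-group by Sinh's theorem as recalled in Section~\ref{s1}. The main obstacle I anticipate is the homotopy bookkeeping in (2): iterating the square-homotopy identity while tracking the conjugation by $\overline{f}$ and the several left and right translations is routine but error-prone, and one must check that the residual discrepancy after the rearrangement is genuinely a 2-coboundary rather than an undetermined correction term.
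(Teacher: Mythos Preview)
Your proposal is correct and follows essentially the same route as the paper: part~(1) via the Opext sequence plus a continuity check on the extension of $\widetilde{G_0}$ by $\pi_0(G)$, part~(2) by choosing $\dot\eta(f,g)=\llbracket\beta_{\eta(f,g)}\rrbracket$ and converting the pointwise product $\dot\eta^\sharp$ into the concatenation $\theta$ via the Eckmann--Hilton square homotopy, and part~(3) formally from (1), (2) and Sinh's theorem. The only refinement is that your anticipated ``residual 2-coboundary'' does not arise: with this matched choice of lifts the paper's explicit homotopies show $\dot\eta^\sharp(f,g,h)=\theta(f,g,h)$ on the nose in $\pi_1(G)$, so the bookkeeping in (2) is slightly cleaner than you fear.
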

\begin{proof}
	(1) The existence of the universal cover group as an abstract group is determined by the vanishing of the image of $\Psi\in\Opext_G(G_0,\pi_1(G))$ under the map  $\epsilon:\Opext_G(G_0,\pi_1(G))\to H^3(\pi_0(G),\pi_1(G))$. This is precisely the condition that $[\dot{\eta}^\sharp]=0 \in H^3(\pi_0(G),\pi_1(G))$. 
	
	All that remains is to show that the abstract group $\widetilde{G}$ is a topological group. Once the universal cover group exists as an abstract group, we observe that Diagram~(\ref{diag1}) can be extended to a commutative diagram of the following form.
	\begin{equation}
	\label{diag2}
	\begin{CD}
	@. 1 @. 1@.  @.  \\  
	@. @VVV @VVV @. @. \\  
	@. \pi_1(G) @= \pi_1 (G)@. @.\\
	@. @VVV @VV{i}V @. @.\\
	1 @>>>\widetilde{G_0} @>{k}>> \widetilde{G}@>>> \pi_0(G) @>>> 1\\
	@. @VVV @VV{j}V @| @. \\
	1 @>>> G_0 @>>> G@>>> \pi_0(G) @>>> 1\\
	@. @VVV @VVV @. @. \\  
	@. 1 @. 1@.  @.  \\
	\end{CD}
	\end{equation}
	
	This follows from the facts that, set-wise, $\widetilde{G_0}=\pi_1(G)\times G_0$ and $\widetilde{G}=\pi_1(G)\times G$, and that $\gamma:H^2(G,\pi_1(G))\to \Opext_G(G_0,\pi_1(G))$ induces the restriction map $H^2(G,\pi_1(G))\to H^2(G_0,\pi_1(G))$ (see \cite{Rat}). Together, these two observations mean that the natural inclusion $\widetilde{G_0} \hookrightarrow \widetilde{G}$ is a group homomorphism, with quotient group $\pi_0(G)$.
	
	 This observation shows that $\widetilde{G}$ is an extension of $\pi_0(G)$ by $\widetilde{G_0}$, so is constructed from a 2-cocycle $\mu\in Z^2(\pi_0(G),\widetilde{G_0})$. In particular, it is equal as a set to $\widetilde{G_0}\times\pi_0(G)$, with a group structure given by:
	$$
	(g,p) \ast (g',p') = (g \alpha_p(g') \mu (p,p'), pp'),\qquad (g,p)^{-1\ast}=(\alpha_p^{-1}(g')^{-1}\;\alpha_p^{-1}(\mu(p,p^{-1}))^{-1},p^{-1}).
	$$
	Here $\alpha_p\in\Aut(\widetilde{G_0})$ with $\alpha_p(g)\coloneqq \,^{\overline{p}}g$.	Both the multiplication and the inverse map are continuous by inspection.
	
	(2) Recall that $\eta(f,g)=\overline{f} \, \overline{g} \, (\overline{fg})^{-1}$ for $f,g\in\pi_0(G)$, where $\overline{f} \in{G} $ is a fixed lifting of $f \in \pi_0(G)$.
	
	Let us compute $\theta(f,g,h)^{-1\bullet}$ for $f,g,h\in\pi_0(G)$. Recall that for $g \in G_0$, $\beta_g$ is a path from $1_G$ to $g$ -- this corresponds to a lifting from $G_0$ to $\widetilde{G_0}$. 
	From formula (\ref{cocycle}) we get:   
	$$\theta(f,g,h) = \llbracket \beta_{\eta(f,gh)}
	\bullet R_{\eta(f,gh)}\,^{\overline{f}}\beta_{\eta(g,h)}
	\bullet R_{\eta(fg,h)}\beta_{\eta(f,g)}^{-1{\bullet}}
	\bullet \beta_{\eta(fg,h)}^{-1{\bullet}} \rrbracket.$$
	
	For ease of notation, we set $\beta_1=\beta_{\eta(f,gh)}$, $\beta_2=\,^{\overline{f}}\beta_{\eta(g,h)}$, $\beta_3=\beta_{\eta(f,g)}$ and $\beta_4=\beta_{\eta(fg,h)}$, so that we have  
	$$\theta(f,g,h)=\llbracket \beta_1 \bullet  R_{\eta(f,gh)}(\beta_2)\bullet R_{\eta(fg,h)}({\beta_3})^{-1\bullet}\bullet \beta_4 ^{-1\bullet} \rrbracket.$$
	
	On the other hand, by definition $\beta_1$ is a lifting of $\eta(f,gh)$ and hence we have $\beta_1=\dot{\eta}(f,gh)$ in the above notation, and similarly for $\beta_2$, $\beta_3$, $\beta_4$. This allows us to compute $\dot{\eta}^\sharp(f,g,h)$ for $f,g,h\in\pi_0(G)$. 
	\begin{equation}
	\label{cocycle3}
	\dot{\eta}^\sharp(f,g,h)=\llbracket \beta_2\rrbracket \diamond \llbracket \beta_1\rrbracket \diamond \llbracket \beta_4\rrbracket^{-1\diamond} \diamond \llbracket \beta_3\rrbracket^{-1\diamond}=\llbracket \beta_2(\vt)\beta_1(\vt)\beta_4(\vt)^{-1}\beta_3(\vt)^{-1}\rrbracket \; .
	\end{equation}
	
	Since $\dot{\eta}^\sharp(f,g,h)$ is central in $\widetilde{G_0}$, we also have $$\dot{\eta}^\sharp(f,g,h)=\llbracket \beta_4\rrbracket^{-1\diamond}\diamond\llbracket \beta_3\rrbracket^{-1\diamond}\diamond\llbracket \beta_2\rrbracket\diamond\llbracket \beta_1\rrbracket=\llbracket \beta_4(\vt)^{-1}\beta_3(\vt)^{-1}\beta_2(\vt)\beta_1(\vt)\rrbracket.$$ Observe that $\dot{\eta}^\sharp(f,g,h)=\llbracket \beta_4(F_4(\vt))^{-1}\beta_3(F_3(\vt))^{-1}\beta_2(F_2(\vt))\beta_1(F_1(\vt))\rrbracket$ for continuous maps $F_1,F_2,F_3,F_4\colon [0,1] \rightarrow [0,1]$. Using the simply-connectedness of $[0,1]$, we get that $\beta_4(F_4(\vt))^{-1}\beta_3(F_3(\vt))^{-1}\beta_2(F_2(\vt))\beta_1(F_1(\vt))$ is homotopic to the path
	
	$$\lambda(\vt)=\fourpartdef{\beta_4(0)^{-1}\beta_3(0)^{-1}\beta_2(0)\beta_1(4\vt)=\beta_1(4\vt)}{0\leq \vt\leq 1/4}{\beta_4(0)^{-1}\beta_3(0)^{-1}\beta_2(4\vt-1)\beta_1(1)=\beta_2(4\vt-1)\eta(f,gh)}{1/4\leq \vt\leq 1/2}
	{\beta_4(0)^{-1}\beta_3(4\vt-2)^{-1}\beta_2(1)\beta_1(1)=\beta_3(4\vt-2)^{-1}\,^{\overline{f}}\eta(g,h)\eta(f,gh)}{1/2\leq \vt\leq 3/4}
	{\beta_4(4\vt-3)^{-1}\beta_3(1)^{-1}\beta_2(1)\beta_1(1)=\beta_4(4\vt-3)^{-1}\eta(f,g)^{-1}\,^{\overline{f}}\eta(g,h)\eta(f,gh)}{3/4\leq \vt\leq 1}$$
	
	Now we just have to show that
	$$\beta_3(0)^{-1}\beta_3(1-\vt)\eta(fg,h)={\beta_3}^{-1\bullet}(\vt)\eta(fg,h)\sim \beta_3(\vt)^{-1}\,^{\overline{f}}\eta(g,h)\eta(f,gh)=\beta_3(\vt)^{-1}\beta_3(1)\eta(fg,h),$$
	$$\beta_4(0)^{-1}\beta_4(1-\vt)=\beta_4^{-1\bullet}(\vt)\sim\beta_4(\vt)^{-1}\eta(f,g)^{-1}\,^{\overline{f}}\eta(g,h)\eta(f,gh)=\beta_4(\vt)^{-1}\beta_4(1).$$
	The first line follows from the homotopy
	$F(\vs,\vt)=\beta_3((1-\vs)\vt)^{-1}\beta_3(1-\vs\vt)\eta(fg,h)$
	and the second line from the homotopy
	$F(\vs,\vt)=\beta_4((1-\vs)\vt)^{-1}\beta_4(1-\vs\vt)$.
	
	(3) The first statement is equivalent to $[\dot{\eta}^\sharp]=0$ by (1).
	The second statement is equivalent to $[\theta]=0$ \cite{BL04,Elg}.
	Thus, everything follows from part (2).
\end{proof}

For the uniqueness of such a universal cover group, one needs to examine the beginning of the exact sequence~(\ref{Opext}). In fact, this exact sequence can be extended on the left to
\begin{equation}\label{Opext2}
H^1(H,A)\to H^1(N,A)^P\xrightarrow{\Delta} H^2(P,A)\to H^2(H,A)\xrightarrow{\gamma} \Opext_H(N,A)\to\ldots.
\end{equation}
See \cite{Hue3} for details, including a description of the map $\Delta:H^1(N,A)^P\to H^2(P,A)$.

 In particular, the group $H^2(\pi_0(G),\pi_1(G))$ plays a key role in questions of uniqueness. Note that we only consider uniqueness of the universal covering group up to congruence of the extension of $G$ by $\pi_1(G)$.

\begin{theorem}
	Let $G$ be a
	locally arcwise connected,
	semilocally simply-connected topological group. Let $\Psi\in \Opext_G(G_0,\pi_1(G))$ be the central extension of $G_0$ by $\pi_1(G)$ in Diagram~(\ref{diag1}), and let $E_\Psi\coloneqq \gamma^{-1}(\Psi)$.
	The following statements related to objects $G_0$, $\widetilde{G_0}$, $\pi_1(G)$ and $\pi_0(G)$ hold:
	\begin{enumerate}
		\item The abelian group $H^2(\pi_0(G),\pi_1(G))$ acts transitively on $E_\Psi$, with kernel $\Delta(H^1(G_0,\pi_1(G))^{\pi_0(G)})$.
		\item $H^2(\pi_0(G),\pi_1(G))/\Delta(H^1(G_0,\pi_1(G))^{\pi_0(G)})$ acts freely on $E_\Psi$.
		\item $H^2(\pi_0(G),\pi_1(G))/\Delta(H^1(G_0,\pi_1(G))^{\pi_0(G)})=0$ if and only if $\lvert E_\Psi\rvert \leq 1$.
	\end{enumerate}
\end{theorem}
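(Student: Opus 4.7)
The plan is to read off all three statements from the exact sequence (\ref{Opext2}), specialized to $H=G$, $N=G_0$, $P=\pi_0(G)$, $A=\pi_1(G)$. The relevant portion is
\begin{equation*}
H^1(G_0,\pi_1(G))^{\pi_0(G)}\xrightarrow{\Delta} H^2(\pi_0(G),\pi_1(G))\xrightarrow{\iota} H^2(G,\pi_1(G))\xrightarrow{\gamma} \Opext_G(G_0,\pi_1(G)),
\end{equation*}
with $\iota$ the inflation map. Since $\gamma$ is a homomorphism of abelian groups, the fiber $E_\Psi=\gamma^{-1}(\Psi)$ is, when non-empty, a principal homogeneous space under $\ker\gamma$. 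By exactness at $H^2(G,\pi_1(G))$ we have $\ker\gamma=\mathrm{im}(\iota)$, and by exactness at $H^2(\pi_0(G),\pi_1(G))$ we have $\ker\iota=\Delta(H^1(G_0,\pi_1(G))^{\pi_0(G)})$; hence $\iota$ induces an isomorphism from the quotient $H^2(\pi_0(G),\pi_1(G))/\Delta(H^1(G_0,\pi_1(G))^{\pi_0(G)})$ onto $\ker\gamma$.

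For (1), define the action of $H^2(\pi_0(G),\pi_1(G))$ on $E_\Psi$ by composing $\iota$ with the torsor action of $\ker\gamma$; equivalently, Baer-sum an extension class in $E_\Psi$ with the inflation of a $2$-cocycle from $\pi_0(G)$. Transitivity is the torsor property and the kernel of the action is precisely $\ker\iota$. Statement (2) follows by descending the action to the quotient, which then acts simply transitively on $E_\Psi$. For (3), a simply transitive action of a group on a non-empty set yields equal cardinalities, so triviality of the quotient is equivalent to $|E_\Psi|=1$; the edge case $|E_\Psi|=0$ is covered by Theorem~\ref{main_th}(1), under the standing hypothesis that $\widetilde{G}$ exists.

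The only nontrivial input is the exactness of (\ref{Opext2}), invoked from \cite{Hue3}; the rest is formal manipulation of torsors. The main point requiring care is verifying that the Baer-sum action on $E_\Psi$ coincides with the $\ker\gamma$-torsor structure — a routine unwinding of the definition of $\gamma$ on inflated classes.
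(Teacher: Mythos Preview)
Your argument is essentially identical to the paper's: define the action of $H^2(\pi_0(G),\pi_1(G))$ on $E_\Psi$ via the inflation map (which the paper calls $\xi$ rather than $\iota$) and addition in $H^2(G,\pi_1(G))$, then read off transitivity and the kernel from exactness of the sequence~(\ref{Opext2}). One small remark: your handling of the empty case in (3) appeals to a ``standing hypothesis that $\widetilde{G}$ exists'' which is not actually stated in the theorem; the paper's own proof is equally silent on this edge case, so you are no worse off, but neither argument literally establishes the backward implication of (3) when $E_\Psi=\emptyset$.
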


\begin{proof}
	(1) Let $\xi:H^2(\pi_0(G),\pi_1(G))\to H^2(G,\pi_1(G))$ be the homomorphism in Diagram~(\ref{Opext}). For $[\tau]\in H^2(\pi_0(G),\pi_1(G))$ and $[\mu]\in E_\Psi$, define the action $[\tau]\cdot[\mu]=[\mu]+\xi([\tau])\in E_\Psi$. This is well-defined by the exactness of (\ref{Opext}), and it is straightforward to see that it is an action. The transitivity also follows from the exactness of (\ref{Opext}), since any two elements in $E_\Psi$ differ by an element of the kernel of $\gamma$.
	
	For the kernel of the action, it is clear that $[\tau]\in H^2(\pi_0(G),\pi_1(G))$ acts trivially on $[\mu]\in E_\Psi$ if and only if $\xi([\tau])=0$. This is precisely the requirement that $[\tau]\in \Delta(H^1(G_0,\pi_1(G))^{\pi_0(G)})$.
	
	(2) Follows from the proof of (1).
	
	(3) Follows easily from (1) and (2).
	
\end{proof}

%
%
%

We may also consider uniqueness of diagrams of the form of Diagram~(\ref{diag2}). We say here that two such diagrams are congruent if the associated abelian extensions 
$$1\to\pi_1(G)\to \widetilde{G}\to G\to 1\quad\mbox{and}\quad 1\to\pi_1(G)\to \widetilde{G}'\to G\to 1$$
are congruent though a map which respects the other arrows in the diagram. If two diagrams come from different elements of $E_\Psi$, they are clearly  non-congruent. However, even if they come from the same element of $E_\Psi$, there may still be homomorphisms $\widetilde{G_0} \hookrightarrow \widetilde{G}$ which give non-congruent diagrams. With this additional structure on universal covering groups of $G$, we get the following result.

\begin{prop}\label{corresp}
	There is a one-to-one correspondence between equivalence classes of universal covering groups of $G$ which fit into Diagram~(\ref{diag2}), and the cohomology group $H^2(\pi_0(G),\pi_1(G))$.
\end{prop}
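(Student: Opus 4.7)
The plan is to exhibit the set of equivalence classes of diagrams of the form~(\ref{diag2}) as a torsor over $H^2(\pi_0(G),\pi_1(G))$; once existence is guaranteed by Theorem~\ref{main_th}, the bijection follows by choosing a basepoint. Throughout, I fix a set-theoretic section $\bar\sigma:\pi_0(G)\to G$ with $\bar\sigma(p)=\overline{p}$ and the resulting 2-cocycle $\nu\in Z^2(\pi_0(G),G_0)$ describing the bottom row of~(\ref{diag2}).

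First I would parametrize diagrams. For any diagram~(\ref{diag2}), the middle row is a group extension $1\to\widetilde{G_0}\to\widetilde{G}\to\pi_0(G)\to 1$ whose outer $\pi_0(G)$-action on $\widetilde{G_0}$ is determined by the column to $G$. A set-theoretic section $\sigma:\pi_0(G)\to\widetilde{G}$ lifting $\bar\sigma$ along $\widetilde{G}\to G\to\pi_0(G)$ yields a 2-cocycle $\mu\in Z^2(\pi_0(G),\widetilde{G_0})$, and the requirement that $\widetilde{G}\to G$ restricts to the projection $\widetilde{G_0}\to G_0$ forces $\rho\circ\mu=\nu$, where $\rho:\widetilde{G_0}\to G_0$.

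Next I would define an action of $Z^2(\pi_0(G),\pi_1(G))$ on these cocycles by $\tau\cdot\mu:=\tau\mu$ (pointwise product in $\widetilde{G_0}$, using that $\pi_1(G)$ sits centrally inside $\widetilde{G_0}$). The centrality of $\pi_1(G)$ and the fact that $\pi_0(G)$ acts on $\pi_1(G)$ via the ambient $G$-action make the cocycle condition for $\tau\mu$ equivalent to the conjunction of the cocycle conditions for $\mu$ and $\tau$; and since $\rho(\tau)=1$, the resulting $\widetilde{G}_{\tau\mu}$ still fits into a diagram of the form~(\ref{diag2}) with the same bottom row.

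For freeness of the induced $H^2$-action, suppose $\phi:\widetilde{G}_\mu\to\widetilde{G}_{\mu'}$ is a congruence of diagrams, where $\mu'=\tau\mu$. Compatibility with $\widetilde{G_0}\hookrightarrow\widetilde{G}$ and with the projection to $\pi_0(G)$ forces $\phi$ to have the form $(g,p)\mapsto(g\,\psi(p),p)$ for some $\psi:\pi_0(G)\to\widetilde{G_0}$; compatibility with $\widetilde{G}\to G$ then forces $\rho(\psi(p))=1$, so $\psi$ lands in $\pi_1(G)$. Writing out the homomorphism condition and using centrality of $\pi_1(G)$ in $\widetilde{G_0}$ yields exactly $\tau=\delta\psi$ in $C^2(\pi_0(G),\pi_1(G))$. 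Conversely, any such $\psi$ produces a congruence. For transitivity, given two diagrams with cocycles $\mu,\mu'$ satisfying $\rho\circ\mu=\rho\circ\mu'=\nu$, the quotient $\mu(\mu')^{-1}$ takes values in $\pi_1(G)$, and the same computation as in Step~2 (run backwards) shows it is a 2-cocycle in $Z^2(\pi_0(G),\pi_1(G))$.

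The main obstacle, as I see it, is the bookkeeping around the non-abelian group $\widetilde{G_0}$: one has to be careful that the cocycle condition for $\mu$ (which is the non-abelian factor-set identity) interacts correctly with multiplication by the central-valued $\tau$, and that the congruence condition in Step~3 does not accidentally allow 1-cochains with values in all of $\widetilde{G_0}$ (which would give only the quotient $H^2/\Delta(H^1(G_0,\pi_1(G))^{\pi_0(G)})$ of the previous theorem). The key point is that preserving the middle column $\widetilde{G}\to G$ in~(\ref{diag2}) pins $\psi$ into $\pi_1(G)$, which is precisely what upgrades the previous torsor statement to the full $H^2(\pi_0(G),\pi_1(G))$.
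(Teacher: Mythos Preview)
Your argument is correct. The paper itself does not give a proof of this proposition at all: it simply cites Theorem~5.4 of Brown--Mucuk \cite{BM} and Theorem~6.5(b) of Taylor \cite{Ta2}. What you have written is a self-contained torsor argument, which is essentially the content of those references specialised to the present situation. In particular, you have correctly isolated the point that distinguishes this result from the preceding theorem in the paper: a congruence of \emph{diagrams} of type~(\ref{diag2}) must respect the middle column $\widetilde{G}\to G$, and this is exactly what forces the $1$-cochain $\psi$ to take values in $\pi_1(G)$ rather than in all of $\widetilde{G_0}$, so that the torsor is over the full $H^2(\pi_0(G),\pi_1(G))$ rather than over the quotient by $\Delta(H^1(G_0,\pi_1(G))^{\pi_0(G)})$.

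One small remark: your opening sentence invokes Theorem~\ref{main_th} for existence, which is fine, but you should make explicit that the proposition is to be read under the standing hypothesis that the obstruction $[\dot\eta^\sharp]$ vanishes (otherwise the set of diagrams is empty while $H^2$ is not, and no bijection exists). The paper leaves this implicit from context. Also, to make the transitivity step airtight you are implicitly using that the automorphisms $\alpha_p\in\Aut(\widetilde{G_0})$ depend only on $\overline{p}\in G$ and not on the particular diagram; this holds because any lift of the conjugation-by-$\overline{p}$ automorphism of $G_0$ to the cover $\widetilde{G_0}$ is unique, but it is worth saying.
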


\begin{proof}
	This result is Theorem 5.4 in \cite{BM}, or Theorem 6.5(b) in \cite{Ta2}.
\end{proof}

\section{More general covers}
\label{s4}
%
%
Let us consider an arbitrary cover group
$\widehat{G_0}\rightarrow G_0$. Since it factors via the universal cover,
its kernel is some quotient $\overline{\pi_1(G)} = \pi_1(G)/B$
of the fundamental group. Let us assume that $B$ is a 
$\pi_0(G)$-submodule of $\pi_1(G)$: otherwise, an extension of the cover
to $G$ is impossible.
An attempt to extend this cover to the whole
of the group $G$
yields a diagram, similar to 
Diagram~(\ref{diag1}):

\begin{equation}
\label{diag3}
\begin{CD}  
\overline{\pi_1(G)} @. \overline{\pi_1(G)} @. \\
@VVV @VV{i}V @. \\
\widehat{G_0} @. \widehat{G}@.\\
@VVV @VV{j}V @.\\
G_0 @>>> G@>>> \pi_0(G)\\
\end{CD}
\end{equation}

The requirement to have a topology on $\widehat{G}$
does not hinder any considerations: $j$ is a local homeomorphism,
fixing the topology. 

The crossed module $\widehat{G_0}\to G$ yields the Taylor cocycle
${\dot\eta}^\sharp\in Z^3(\pi_0(G),\overline{\pi_1(G)})$, as in Section~\ref{s3}.  

Also, we get a strict 2-group $\widehat{\sG}$ associated to
the crossed module $\widehat{G_0} \rightarrow G$:
$$
\widehat{\sG}_0=\{\star\}, \ \ 
\widehat{\sG}_1 (\star,\star) = G, \ \ 
\widehat{\sG}_2 (x,y) =
\{
\xymatrix{
	\star
	\ar@/^1pc/[rr]_{\quad}^{y}="2"
	\ar@/_1pc/[rr]_{x}="1"
	&& \star
	\ar@{}"1";"2"|(.2){\,}="7"
	\ar@{}"1";"2"|(.8){\,}="8"
	\ar@{=>}"7" ;"8"^{g}
} 
\,\mid\
g\in \widehat{G_0},\ y = \partial (g) x \}
.$$
This 2-group yields the Sinh cocycle
$\theta\in Z^3(\pi_0(G),\overline{\pi_1(G)})$.  

The 2-group $\widehat{\sG}$ also admits a homotopic description:
$\widehat{\sG}_2 (x,y)$ consist of those paths that are right translates
of paths comprising $\widehat{G_0}$. All our proofs go through.
Hence, we have the following corollary of the proof of 
Theorem~\ref{main_th}:
\begin{cor}
	\label{main_th_general}
	The following statements related to
	objects $G$, $G_0$, $\widehat{G_0}$, $\overline{\pi_1(G)}$, $\pi_0(G)$, $\dot{\eta}^\sharp$ and $\theta$, unveiled
	in the preceding passage, hold:
	\begin{enumerate}
	\item The topological cover group $\widetilde{G}$ exists if and only if
	$[\dot{\eta}^\sharp]=0 \in H^3(\pi_0(G),\overline{\pi_1(G)})$.
	\item The classes are equal: 
	$[\dot{\eta}^\sharp]= [\theta] \in H^3(\pi_0(G),\overline{\pi_1(G)})$.
	\item The topological cover group $\widehat{G}$ exists if and only if
	the 2-group $\widehat{\sG}$ is 2-equivalent to a skeletal strict 2-group.
	\item The abelian group $H^2(\pi_0(G),\overline{\pi_1(G)})/\Delta(H^1(G_0,\overline{\pi_1(G)})^{\pi_0(G)})$ acts freely and transitively on $E_\Psi$, with $\Delta$ and $E_\Psi$ defined as in Section~\ref{s3}.
	\item $H^2(\pi_0(G),\overline{\pi_1(G)})/\Delta(H^1(G_0,\overline{\pi_1(G)})^{\pi_0(G)})=0$ if and only if $\lvert E_\Psi\rvert \leq 1$.
	\end{enumerate}
\end{cor}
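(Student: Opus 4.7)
The plan is to show that the proof of Theorem~\ref{main_th} goes through verbatim after substituting $\widehat{G_0}$ for $\widetilde{G_0}$ and $\overline{\pi_1(G)}$ for $\pi_1(G)$, with the hypothesis that $B$ is a $\pi_0(G)$-submodule of $\pi_1(G)$ being precisely what licenses the substitution.

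First I would verify that the algebraic setup of Section~\ref{s3} transfers. Because $B$ is $\pi_0(G)$-stable, the quotient $\overline{\pi_1(G)}$ inherits a $G$-module structure with trivial $G_0$-action, and the central extension $\overline{\pi_1(G)}\to \widehat{G_0}\to G_0$ determines an element $\Psi\in\Opext_G(G_0,\overline{\pi_1(G)})$. The exact sequence~(\ref{Opext}) applied with $(H,N,P,A)=(G,G_0,\pi_0(G),\overline{\pi_1(G)})$ then yields part~(1) at the level of abstract groups, the Taylor cocycle $\dot{\eta}^\sharp$ being constructed as before from a set-theoretic lift $\dot\eta$ of $\eta$ to $\widehat{G_0}$. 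The topological-group structure on $\widehat{G}$ follows as in the proof of Theorem~\ref{main_th}(1) from writing $\widehat{G}\cong \widehat{G_0}\times \pi_0(G)$ via a $2$-cocycle $\mu\in Z^2(\pi_0(G),\widehat{G_0})$; continuity of the multiplication and inverse is automatic since the topology on $\widehat{G_0}$ is inherited from $G_0$ via the covering map and $j$ is a local homeomorphism at $1$.

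For parts~(2) and~(3), the homotopic description of $\widehat{\sG}_2(x,y)$ given in the excerpt identifies $2$-morphisms with homotopy classes of paths in the image of $\widehat{G_0}$, so the skeletal-$2$-group construction of Section~\ref{s1} and the resulting cocycle formula~(\ref{cocycle}) are structurally identical, with each $\beta_{\eta(\cdot,\cdot)}$ now lying in $\widehat{G_0}$ rather than $\widetilde{G_0}$. The reshuffling of $\beta_2(\vt)\beta_1(\vt)\beta_4(\vt)^{-1}\beta_3(\vt)^{-1}$ into the piecewise path $\lambda$ performed in the proof of Theorem~\ref{main_th}(2) depends only on the simply-connectedness of $[0,1]$ and on each $\beta_i$ being a lift of the corresponding value of $\eta$; both features persist. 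Part~(3) is then formal from (1), (2) and the splitness criterion $[\theta]=0 \iff \widehat{\sG}$ split of \cite{BL04,Elg}. For parts~(4) and~(5), I would reuse the argument of the second theorem in Section~\ref{s3}, applied to the extended exact sequence~(\ref{Opext2}) with $A=\overline{\pi_1(G)}$: the action $[\tau]\cdot[\mu]=[\mu]+\xi([\tau])$ is well-defined and transitive by exactness at $\Opext_H(N,A)$, its kernel is $\Delta(H^1(G_0,\overline{\pi_1(G)})^{\pi_0(G)})$ by exactness at $H^2(G,\overline{\pi_1(G)})$, and (5) is immediate from (4).

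The main obstacle I anticipate is purely bookkeeping in (2): one must confirm that the distinguished paths $\beta_{\eta(f,g)}$ used to set up the skeletal replacement of $\widehat{\sG}$ can be chosen inside $\widehat{G_0}$. Since $\eta(f,g)\in G_0$ and $\widehat{G_0}\to G_0$ is a topological cover, such a choice is a routine path-lifting exercise; once it is fixed, every formula and homotopy in the proof of Theorem~\ref{main_th} transfers without modification, and no genuinely new computation is required.
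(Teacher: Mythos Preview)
Your proposal is correct and matches the paper's approach exactly: the paper itself provides no separate proof of this corollary beyond the single sentence ``All our proofs go through'' preceding its statement, so your plan of checking that each step of Theorem~\ref{main_th} (and the subsequent uniqueness theorem) survives the substitution $\widetilde{G_0}\rightsquigarrow\widehat{G_0}$, $\pi_1(G)\rightsquigarrow\overline{\pi_1(G)}$ is precisely what is intended. Your more detailed verification---especially the observation that $\pi_0(G)$-stability of $B$ is what makes $\overline{\pi_1(G)}$ a $G$-module and that path-lifting along the cover $\widehat{G_0}\to G_0$ supplies the required $\beta_{\eta(f,g)}$---is a faithful unpacking of that one sentence.
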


One can also obtain an analogue of Proposition~\ref{corresp} in this situation.

\end{document}